\newcommand\1{\lower 9pt\hbox{\underbar{}}}
\numberwithin{equation}{section}
\newtheorem {theorem}[equation]                   {Theorem}
\newtheorem {lemma}[equation]           {Lemma}
\theoremstyle{definition}
\newtheorem {definition}[equation]{Definition}
\newtheorem {remark}[equation]          {Remark}
\newcommand{\pr} {\smallskip\noindent{\bf Proof\,\,}}
\DeclareMathOperator{\dive}{div}
\def\R{\mathbb{R}}
\newcommand{\pp}[2]{\frac{\partial#1}{\partial#2}}
\begin{document}

\title[Morse functions and contact convex surfaces]{Morse functions and contact convex surfaces}

\author{Robert Cardona}\address{Robert Cardona\\IRMA, UMR 7501, 7 rue Ren\'e-Descartes, 67084 Strasbourg Cedex}\email{robert.cardona@math.unistra.fr}

\author{C\'edric Oms}\address{C\'edric Oms \\UMPA, ENS de Lyon Site Monod,	 	 	 
46 All\'ee d'Italie,	 
69007 Lyon}\email{cedric.oms@ens-lyon.fr}

\thanks{R. Cardona thanks the LabEx IRMIA and the Universit\'e de Strasbourg for support. C. Oms is partially supported by the ANR grant ``Cosy" (ANR-21-CE40-0002). Both authors are partially supported by the ANR grant ``CoSyDy" (ANR-CE40-0014) and by the AEI grant PID2019-103849GB-I00 of MCIN/ AEI /10.13039/501100011033.}

\begin{abstract}
 Let $f$ be a Morse function on a closed surface $\Sigma$ such that zero is a regular value and such that $f$ admits neither positive minima nor negative maxima. In this expository note, we show that $\Sigma\times \mathbb{R}$ admits an $\mathbb{R}$-invariant contact form $\alpha=fdt+\beta$ whose characteristic foliation along the zero section is (negative) weakly gradient-like with respect to $f$. The proof is self-contained and gives explicit constructions of any $\mathbb{R}$-invariant contact structure in $\Sigma \times \mathbb{R}$, up to isotopy. As an application, we give an alternative geometric proof of the homotopy classification of $\mathbb{R}$-invariant contact structures in terms of their dividing set.
\end{abstract}
\maketitle
	
\section{Introduction}

The study of embedded surfaces in contact $3$-manifolds has a rich history, tracing back to the works of Bennequin \cite{Ben}, Eliashberg \cite{El} and Giroux \cite{G1}.  The contact structure \emph{prints} a $1$-dimensional singular foliation on the surface $\Sigma$, called the \emph{characteristic foliation}, that has singularities at the points where the tangent space of the surface agrees with the contact structure.  The divergence of the vector field is non-vanishing at the singular points of the foliation. The characteristic foliation is particularly well understood when the surface is convex - meaning that there exists a contact vector field transverse to the surface. Equivalently, this means that there exists a contact form defining the contact structure that is invariant with respect to the contact vector field and therefore called $\R$-invariant. The seminal work of Emmanuel Giroux \cite{G1,G2} establishes the foundations of convex surface theory, and shows for instance that an embedded surface in a contact three-manifolds is generically convex. Even more surprisingly, when $\Sigma$ is convex, the contact structure around it is determined by an isotopy class of embedded separating curves on $\Sigma$. These curves are called the \emph{dividing set}, which is defined as the set of points where the contact vector field is tangent to the contact structure. 

In this note, we are interested in the converse problem: Given a closed orientable surface $\Sigma$ and a set of dividing curves $\Gamma \subset \Sigma$, does there exist an $\mathbb{R}$-invariant contact form in $\Sigma\times \mathbb{R}$ whose dividing set is given by $\Gamma$? We show that one can construct such a contact form and that its characteristic foliation can be ``prescribed": It can be chosen to be weakly gradient-like for any Morse function whose zero regular level set is $\Gamma$ and that has neither a minimum where the function is positive nor a maximum where the function is negative. 

\begin{theorem}\label{thm:1}
 Let $f:\Sigma \to \R$ be a Morse function on an orientable closed surface $\Sigma$ such that zero is a regular value, and such that it does not admit positive minima nor negative maxima. Then $\Sigma\times \mathbb{R}$ admits an $\mathbb{R}$-invariant contact form $\alpha=fdt+\beta$ whose characteristic foliation along the zero section is (negatively) weakly gradient-like for $f$. 
\end{theorem}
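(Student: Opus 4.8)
The plan is to reduce the entire statement to a convex problem for the $1$-form $\beta$ on $\Sigma$ and then solve it by patching explicit local models read off from the Morse data. First I would record the two computations that govern everything. Writing $\alpha=f\,dt+\beta$ with $f,\beta$ pulled back from $\Sigma$, one has $d\alpha=df\wedge dt+d\beta$, and since $\beta\wedge d\beta=0$ on a surface,
\[
\alpha\wedge d\alpha=\bigl(f\,d\beta+\beta\wedge df\bigr)\wedge dt,
\]
so $\alpha$ is contact exactly when $\mu(\beta):=f\,d\beta+\beta\wedge df$ is a positive area form on $\Sigma$. Restricting $\alpha$ to $\Sigma\times\{0\}$ gives $\beta$, so the characteristic foliation is $\ker\beta$, singular at the zeros of $\beta$. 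Directing it by the vector field $X$ with $\iota_X\omega=\beta$ for a fixed area form $\omega$, the identities $\beta\wedge df=-df(X)\,\omega$ and $d\beta=(\operatorname{div}_\omega X)\,\omega$ hold, whence $\beta\wedge df\ge 0$ is precisely the condition that $X$ be negatively weakly gradient-like, and $\mu(\beta)=\bigl(f\operatorname{div}_\omega X-df(X)\bigr)\omega$.

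The crucial observation is that $\mu(\beta)$ and $\beta\wedge df$ are $\R$-linear in $\beta$, so the target conditions cut out convex sets, which is what makes a local-to-global construction feasible. I would then analyse $\mu$ at the critical points. Since $0$ is a regular value, no critical point lies on $\Gamma=f^{-1}(0)$, so $\operatorname{sign}f$ is defined at each one; as $df=0$ there, positivity of $\mu$ at a critical point $p$ reduces to $f(p)\operatorname{div}_\omega X(p)>0$, i.e.\ the divergence must carry the sign of $f(p)$. For a negative-gradient-like field a minimum is a sink ($\operatorname{div}<0$) and a maximum a source ($\operatorname{div}>0$), so compatibility forces $f<0$ at minima and $f>0$ at maxima — exactly the hypotheses ``no positive minima, no negative maxima''. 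At a saddle the divergence is unconstrained, so it can always be tuned.

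Concretely I would assemble $\beta$ from pieces, all computed against one fixed $\omega$. Near each critical point I use Morse coordinates: at an extremum, the radial sink/source, whose divergence sign matches $f$ by the hypothesis; at a saddle, the anisotropic model $X=(-c_1x,\,c_2y)$, which is downhill (so gradient-like) with $\operatorname{div}=c_2-c_1$, choosing $c_1,c_2>0$ so that $c_2-c_1$ has the sign of $f(p)$. Shrinking the neighbourhoods, $\mu>0$ there by continuity. On the complement $df\neq0$, and there is no obstruction: taking $X=-\varphi(f)\nabla f$ gives $\operatorname{div}_\omega X=-\varphi'(f)|\nabla f|^2-\varphi(f)\Delta f$, and since $|\nabla f|$ is bounded below on this compact region, a suitable $\varphi$ makes the divergence carry the sign of $f$, hence $\mu>0$. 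On the overlap annuli every piece is strictly gradient-like and contact.

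Finally I would glue with a partition of unity $\sum_i\chi_i=1$ subordinate to this cover, setting $\beta=\sum_i\chi_i\beta_i$. Using $\sum_i d\chi_i=0$,
\[
\mu(\beta)=\sum_i\chi_i\,\mu(\beta_i)+f\sum_i d\chi_i\wedge(\beta_i-\beta_j),
\]
where the first sum is a positive combination of positive area forms and the correction is small provided the pieces are arranged $C^0$-close on the overlaps. The same convexity preserves $\beta\wedge df\ge0$, and since the pieces are strictly gradient-like there ($\beta_i\wedge df>0$) no spurious zeros of $\beta$ appear, so the zeros of the glued $\beta$ are exactly the critical points with the prescribed local models; then $\alpha=f\,dt+\beta$ is the desired $\R$-invariant contact form, with dividing set $\Gamma=f^{-1}(0)$. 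I expect the main difficulty to be precisely this gluing together with the saddle model: the hypotheses dispose of the extrema automatically, but at saddles one must hand-pick the anisotropic field, and one must control the error term $f\,d\chi\wedge\beta$ so that it never overcomes the positive term — most cleanly by replacing the partition of unity with a handle-by-handle construction in which neighbouring pieces are made to agree along the attaching regions.
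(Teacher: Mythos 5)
Your reduction and your local models coincide with the paper's: the paper deduces Theorem \ref{thm:1} from Theorem \ref{thm:main} by exactly your computation $\alpha\wedge d\alpha = dt\wedge\bigl(f\operatorname{div}_\omega X - X(f)\bigr)\omega$, its elliptic model is your radial field, and its hyperbolic model $(x-3y)\partial_x+(y-3x)\partial_y$ is your anisotropic saddle with $c_2-c_1$ of the sign of $f(p)$; you also correctly locate where the hypothesis (no positive minima, no negative maxima) enters, namely $f\operatorname{div}_\omega X>0$ at the extrema, where $-X(f)$ vanishes. The genuine gap is the gluing. Your identity $\mu(\beta)=\sum_i\chi_i\,\mu(\beta_i)+f\sum_i d\chi_i\wedge(\beta_i-\beta_j)$ is correct, but the required smallness cannot be arranged: $|d\chi_i|$ scales like the inverse of the overlap width, so the error term is of order $|f|\cdot\sup|\beta_i-\beta_j|$ divided by that width, and your pieces --- the linear saddle model versus $-\varphi(f)\nabla f$ --- differ at order one on the overlap with no parameter to shrink the discrepancy (rescaling all $\beta_i$ by a common constant scales the positive term and the error identically, and shrinking the overlap inflates $d\chi_i$). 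So ``arranged $C^0$-close on the overlaps'' is precisely the unproved step, and your closing sentence concedes it. Convexity is not the operative mechanism here; the missing degree of freedom is conformal rescaling $\beta\mapsto\lambda\beta$ (equivalently $\omega\mapsto g\omega$), for which $\mu(\lambda\beta)=\lambda\,\mu(\beta)+f\,d\lambda\wedge\beta$, where near a regular leaf the correction has a definite sign that can be made to dominate by taking $\lambda$ with large derivative along $f$.

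That rescaling is exactly how the paper closes the argument: it decomposes $\Sigma$ into $2$-atoms, cuts off the hyperbolic model so the field is \emph{literally} parallel to the boundary segments (using functions $u,v$ with $u',v'$ large to keep the divergence sign), puts the constant model $g\,\partial_z$ with $\partial g/\partial z>0$ on the bands so adjacent pieces agree on the nose, and glues atoms along regular annuli by replacing $\Omega$ with $g(s)\Omega$ for monotone $g$ with large derivative, using $\operatorname{div}_{g\Omega}X=\operatorname{div}_\Omega(X)-g'(s)/g(s)$ and the freedom to multiply an atom's area form by a positive constant; the transition across $\Gamma$ (positive atom to negative atom) is the interpolation of \cite[p.~231]{Ge}. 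Your fallback --- a handle-by-handle construction with pieces agreeing on attaching regions --- is essentially this and would complete the proof, but as submitted it is deferred rather than executed. A secondary wrinkle: near $\Gamma$ your ansatz $-\varphi(f)\nabla f$ does not make $\operatorname{div}$ carry the sign of $f$ for a generic metric, since on $\Gamma$ the divergence equals $-\varphi'(0)|\nabla f|^2-\varphi(0)\Delta f$, which need not vanish; this is harmless for contactness alone (there $-X(f)>0$ dominates), but to get the full divided-foliation statement of Theorem \ref{thm:main} you should adapt the metric near $\Gamma$ (e.g.\ product coordinates in which $\Delta f=0$), or argue as the paper does.
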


In particular, the dividing set is given by the zero level set of $f$. This proves that given any set of dividing curves on a surface, there exists an $\R$-invariant contact form whose characteristic foliation is divided by $\Gamma$ (see Definition \ref{def:div}). Without prescribing a characteristic foliation, an $\mathbb{R}$-invariant contact form inducing a given set of dividing curves was constructed by Lisi \cite{Li}. As we observe in Remark \ref{rem:Alter}, one can alternatively prove Theorem \ref{thm:1} using classical techniques in convex surface theory. Instead, we prove Theorem \ref{thm:1} as a direct consequence of an explicit construction, using local models, of a vector field and an area-form satisfying Theorem \ref{thm:main}.

\begin{theorem}\label{thm:main}
 Let $f:\Sigma \to \R$ be a Morse function on an orientable closed surface $\Sigma$ such that zero is a regular value, and such that it does not admit positive minima nor negative maxima. Then there exists a weakly negative gradient-like vector field $X$ and an area form $\omega \in \Omega^2(\Sigma)$ such that $\pm (\dive_\omega(X))_x>0$ when $\pm f(x)>0$. 
\end{theorem}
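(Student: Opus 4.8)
The plan is to decouple the problem: first build the vector field $X$ with the correct \emph{infinitesimal} behaviour at the zeros of $f$, and only afterwards select the area form $\omega$. The guiding observation is that at a zero $p$ of a vector field $X$ one has $(\dive_\omega X)(p)=\operatorname{tr}(DX_p)$ \emph{independently of the chosen area form} $\omega$: in a chart where $\omega=\rho\,dx\wedge dy$, the term $X^i\partial_i\log\rho$ vanishes at $p$ because $X(p)=0$. Hence the sign of the divergence at each critical point of $f$ is forced by $X$ alone, and the whole difficulty is concentrated there. Since the desired sign of $(\dive_\omega X)_x$ equals the sign of $f(x)$, I must arrange that $\operatorname{tr}(DX_p)$ has the sign of $f(p)$ at every critical point $p$, while $X$ remains weakly negative gradient-like.

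This is where the hypotheses on $f$ enter. For the negative gradient flow, local minima are sinks and local maxima are sources, so a contracting node (all eigenvalues negative, $\operatorname{tr}<0$) at a minimum and an expanding node (all eigenvalues positive, $\operatorname{tr}>0$) at a maximum are the only gradient-like models available. The requirement $\operatorname{sign}\operatorname{tr}(DX_p)=\operatorname{sign}f(p)$ then \emph{forces} minima to lie in $\{f<0\}$ and maxima in $\{f>0\}$, which is exactly the content of ``no positive minima, no negative maxima''. At a saddle the situation is flexible: the index-one model $X=-a\,x\partial_x+b\,y\partial_y$ (with $a,b>0$) satisfies $df(X)=-2ax^2-2by^2<0$ off the origin, is a genuine saddle, and has trace $b-a$ of arbitrary sign. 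I would therefore take the local models to be the standard contracting node at each (negative) minimum, the expanding node at each (positive) maximum, and an asymmetric saddle with $b>a$ at positive saddles and $a>b$ at negative saddles, so that $\operatorname{tr}(DX_p)$ carries the sign of $f(p)$ in every case. Extending these to a global weakly negative gradient-like field $X$ on $\Sigma$ matching the models is routine Morse theory.

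With $X$ fixed, I would write $\omega=e^{g}\omega_0$ for a fixed reference area form $\omega_0$ and a function $g$ to be chosen, using $\dive_\omega X=\dive_{\omega_0}X+X(g)$. Near each critical point I take $g$ locally constant, so that there $X(g)=0$ and $\dive_\omega X=\dive_{\omega_0}X$; by the trace computation and continuity this already has the sign of $f$ on a sufficiently small neighborhood $N_p$ of each $p$. On the compact complement $K=\Sigma\setminus\bigcup_p N_p$, where $X$ is nowhere zero and the flow time between the boundary circles is finite, I must choose $g$ (with the prescribed constant boundary values on $\partial N_p$) so that $X(g)>-\dive_{\omega_0}X$ on $K\cap\{f>0\}$ and $X(g)<-\dive_{\omega_0}X$ on $K\cap\{f<0\}$. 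Since these are open inequalities on the derivative of $g$ along the nonvanishing field $X$, they can be solved by integrating along flow lines, the delicate point being the collar of $\Gamma=f^{-1}(0)$, where the divergence must change sign as the flow crosses from $\{f>0\}$ to $\{f<0\}$.

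The main obstacle is the global compatibility of this last step: a single function $g$ must satisfy the transport inequality along every flow line of $X$ simultaneously, with matching values at the critical points. This is possible precisely because the prescribed divergence signs are compatible with the dynamics: we ask the flow to be area-expanding on $\{f>0\}$ and area-contracting on $\{f<0\}$, and the hypotheses guarantee that every flow line runs from a source or positive saddle (in $\{f>0\}$) down across $\Gamma$ to a sink or negative saddle (in $\{f<0\}$), so the expansion-then-contraction pattern never contradicts the finite increments of $g$ between critical points. Were a minimum to lie in $\{f>0\}$ or a maximum in $\{f<0\}$, the required sign of the trace would clash with the sink/source nature of that point and no such $X$ and $\omega$ could exist; verifying that the construction goes through under the stated hypotheses, and in particular that the collar of $\Gamma$ is threaded consistently with the local models, is the technical heart of the argument.
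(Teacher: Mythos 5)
Your local analysis is correct and, up to a linear change of coordinates, coincides with the paper's local models: the paper's hyperbolic field $(x-3y)\partial_x+(y-3x)\partial_y$ for $f=c+(x+y)^2-(x-y)^2$ is exactly your asymmetric saddle $-u\,\partial_u+2v\,\partial_v$ in the coordinates $u=x+y$, $v=x-y$, and your observation that $(\dive_\omega X)(p)=\operatorname{tr}(DX_p)$ at a zero of $X$, independently of $\omega$, is a clean way to see both why the hypotheses on $f$ are necessary and why the critical points themselves impose no obstruction. Your global strategy, however, genuinely differs from the paper's: the paper builds $X$ and $\omega$ \emph{simultaneously and semi-locally}, decomposing $\Sigma$ into $2$-atoms (Morse disks plus bands), cutting off the model fields so they become parallel to the gluing segments, and interpolating area forms across the connecting annuli by conformal factors $g(s)$ of the level-set coordinate, with the sign change across $\Gamma$ produced by Geiges's interpolation trick; you instead fix $X$ globally first and try to realize the divergence condition by a single conformal rescaling $\omega=e^{g}\omega_0$.

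The gap is that this last step --- the only genuinely global one --- is asserted rather than proved; you say yourself that verifying it is ``the technical heart of the argument,'' which is precisely what a proof cannot defer. Concretely, three things are missing from your flow-line formulation of the transport inequality $X(g)\gtrless-\dive_{\omega_0}X$: (i) you never show the required increments of $g$ between the neighborhoods $N_p$ are uniformly finite (they are, but only because $X(f)\le -c_0<0$ on the compact complement $K$ of the $N_p$'s forces uniformly bounded transit times --- an argument you must supply, since flow lines shadowing stable manifolds of saddles have unbounded total transit time and only the truncation by the $N_p$'s saves you); (ii) a function defined by integrating along flow lines must be shown single-valued and smooth, which the heteroclinic structure makes nontrivial, and your appeal to the ``expansion-then-contraction pattern'' is not a consistency proof for the system of inequalities attached to \emph{all} flow lines with \emph{matching} constants at the critical points; (iii) in the collar of $\Gamma$ you need $\dive_{\omega_0}X+X(g)$ to change sign across $\Gamma$ on every crossing line, and you give no mechanism making this compatible with the constraints already imposed elsewhere. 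The route can be completed, and more cheaply than by flow-line bookkeeping: first adapt $\omega_0$ itself near $\Gamma$ (rectify $X$ in a collar, where it is transverse to the level sets, and choose the density so that $\dive_{\omega_0}X$ vanishes exactly on $\Gamma$ with the sign of $f$ nearby --- this is exactly the trick from \cite[p.~231]{Ge} that the paper uses) and note that near each critical point $\dive_{\omega_0}X$ already has the sign of $f$ by your trace computation; then the single ansatz $g=\chi(f)$, with $\chi'=0$ near $0$, $\chi'\le 0$ of large modulus on $\{f\ge\delta\}$ and $\chi'\ge 0$ large on $\{f\le-\delta\}$, finishes: $X(g)=\chi'(f)\,X(f)$ has the sign of $f$ everywhere it is nonzero and dominates $\dive_{\omega_0}X$ on $K$ outside the collar because $|X(f)|\ge c_0$ there. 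With that repair your argument would be complete and arguably shorter than the paper's atom-by-atom construction; as written, it is a plausible plan whose decisive step is missing.
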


For a precise definition of weakly gradient-like vector field, see Definition \ref{def:weakgrad}.

\begin{proof}[Proof of the Theorem \ref{thm:1}]
 We apply Theorem \ref{thm:main} and define a $1$-form by  $\alpha=fdt+\iota_X \omega$. A straightforward computation yields $\alpha \wedge d\alpha =dt \wedge (f\dive_\omega X-X(f))\omega$. By construction $f\dive_\omega X \geq 0$, and the fact the $X$ is weakly gradient-like implies that $-X(f)\geq 0$. Furthermore, they do not vanish simultaneously since the first term only vanishes along the zero level set which is regular by assumption, and $-X(f)$ only vanishes at singular points of $f$. This shows that $\alpha\wedge d\alpha >0$, hence $\alpha$ defines a contact form.
\end{proof}

\begin{remark}
The connection between the nodal sets of eigenfunctions of the Laplacian and dividing sets of convex surfaces of contact $3$-manifolds was first studied by Komendarczyk \cite{Ko}, followed by Lisi \cite{Li}, and is thus a motivation for being able to prescribe the $\mathbb{R}$-invariant function in the expression of an $\mathbb{R}$-invariant contact form. Applying Theorem \ref{thm:1}, we find an $\mathbb{R}$-invariant contact form $\alpha=fdt+\beta$. If there exists a contact metric such that $\alpha=\lambda \star d\alpha$ for a constant $\lambda \neq 0$, and which makes the vector field $\tfrac{\partial}{\partial t}$ orthonormal to $\Sigma\times \{t\}$ near the zero section, then by \cite[Lemma 2.7]{Ko} the Morse function $f$ is an eigenfunction of the Laplacian for some metric on the surface. The existence of such a metric, however, is not clear in general, see also Problem 4.1 in \cite{Ko}.
\end{remark}

In the last section, we use Theorem \ref{thm:1} to compute the degree of the Gauss map associated with an $\mathbb{R}$-invariant contact structure in $\Sigma \times \mathbb{R}$. This gives an alternative proof of the homotopical classification of $\mathbb{R}$-invariant contact structures in terms of their dividing set.

\subsection*{Acknowledgments}
The authors would like to thank John Etnyre, Marc Kegel and Daniel Peralta-Salas for useful comments, as well as Eva Miranda for her constant encouragement. 

\section{Preliminaries}

We will always denote a contact three-manifold by $(M,\xi)$ and assume that the contact structure $\xi$ is coorientable, so $\xi =\ker\alpha$ for a globally defined one-form $\alpha\in \Omega^1(M)$ that satisfies that $\alpha \wedge d\alpha\neq 0$.

We introduce in this section the main definitions, which can all be found in the classical reference \cite{Ge}.
Convex surfaces are a powerful tool to study $3$-dimensional contact manifolds and were introduced in \cite{G1}, other standard references are \cite{Et} and \cite{Mas}. The higher dimensional study of contact convexity has recently been initiated in \cite{HH}.

Given a surface embedded in a three-dimensional contact manifold $(M,\xi)$, the contact structure induces a singular foliation on $\Sigma$, the \emph{characteristic foliation}, which is defined as $\Sigma_\xi=T\Sigma\cap \xi$. The singular points of the foliation are exactly those points $x\in \Sigma$ where $\xi$ is tangent to $T\Sigma$. By the contact condition, the singular points of the characteristic foliation are isolated. Let $\Omega\in \Omega^2(\Sigma)$ be an area form of $\Sigma$. The characteristic foliation can alternatively be defined as an equivalence class of vector fields $[X]$, where $X\in \Omega^1(M)$ is defined by $\iota_X\Omega=\beta$, where $\beta=i^*\alpha$ is the pullback of $\alpha$ by the inclusion map $i:\Sigma\hookrightarrow M$. Two singular vector fields are in the same equivalence class if they differ up to multiplication of a positive function.

A vector field $Y$ is a \emph{contact vector field} if $Y$ preserves the contact structure, that is $\mathcal{L}_Y\alpha=g\alpha$, where $g\in C^\infty(M)$. An embedded surface $\Sigma \subset (M,\xi)$ in a contact manifold is \emph{convex} if there exists, in a neighborhood of $\Sigma$, a contact vector field that is transverse to $\Sigma$. The set $\Gamma \subset \Sigma$ defined by $\Gamma:= \{x \in \Sigma\enspace | \enspace Y_x \in \xi \}$ is called the \emph{dividing set} of the convex contact surface. Equivalently, the $\R$-invariant contact form that defines the contact structure in a neighborhood around the contact surface $\Sigma$ can be written as $\alpha=udt+\beta$, where $u\in C^\infty(\Sigma)$, $\beta \in \Omega^1(\Sigma)$ and $Y=\frac{\partial}{\partial t}$,  and the dividing set is given by $\Gamma= \{x\in \Sigma \enspace|\enspace u(x)=0 \}$.
It follows from the contact condition that $\Gamma$ is an embedded curve and that $\Sigma \setminus \Gamma$ is equipped with an exact symplectic form. 

The dividing set contains all the information about the contact structure in a neighborhood around the convex hypersurface. More precisely, given any two contact structures whose characteristic foliations are divided by the same dividing set up to isotopy, then the contact structures are contact isotopic.

One can show that the dividing set \emph{divides} the characteristic foliation, which means the following.

\begin{definition}\label{def:div}[Definition 4.8.3 in \cite{Ge}]
Let $\Gamma$ be a set of dividing curves that gives a decomposition $\Sigma\setminus \Gamma=\Sigma_+\sqcup \Sigma_-$. A singular $1$-dimensional foliation $\mathcal{F}$ on a closed surface $\Sigma$ is divided by a set of dividing curves $\Gamma$ (or $\Gamma$ divides $\mathcal{F}$) if the following holds:
\begin{itemize}
\item $\mathcal{F}$ is transverse to $\Gamma$,
\item There is an area form $\omega$ and a vector field $X$ defining $\mathcal{F}$ such that $\Sigma_{\pm}=\{p\in \Sigma \enspace | \enspace \pm \operatorname{div}_\omega(X)>0\}$ and $X$ points out of $\Sigma_+$ along $\Gamma$.
\end{itemize}
\end{definition}

\section{Weakly gradient-like characteristic foliations}

In this section, we prove Theorem \ref{thm:main}, i.e. we show that for each Morse function cutting the zero section transversely and without positive minima nor without negative maxima there exists a (negative) weakly gradient-like vector field divided (in the sense of Definition \ref{def:div}) by the zero level set. Recall the definition of weakly gradient-like vector field \cite[Section 9.3]{CE}.

\begin{definition}\label{def:weakgrad}
Let $M$ be a manifold and $f:M \longrightarrow \mathbb{R}$ a smooth function defined on $M$. A vector field $X$ in $M$ is weakly gradient-like for a function $f$ if 
\begin{itemize}
\item $X(p)=0$ if and only if $df(p)=0$,
\item $X(f)>0$ outside the zeroes of $X$.
\end{itemize}
\end{definition}
A \textit{negative} weakly gradient-like vector field satisfies instead $X(f)<0$ outside the zeroes of $X$.  In some parts of the literature, the definition above defines a gradient-like vector field. Here we make a distinction, since a very common definition of gradient-like vector field imposes further that $X$ is exactly the gradient of $f$ with respect to the standard metric on a small enough Morse neighborhood of the critical points of $f$.

As mentioned in the introduction, the conditions imposed on the Morse function are necessary. If there was a positive minimum, a negative gradient-like vector field near that minimum is modeled by a sink and hence cannot have positive divergence. Similarly, at a negative maximum we would have a vector field with a source, which cannot have negative divergence.

\subsection{Decomposing $\Sigma$ into $2$-atoms}

Let $f:\Sigma \longrightarrow \mathbb{R}$ be a Morse function defined on a closed surface $\Sigma$ and satisfying both required properties: it cuts transversely the zero section and does not have positive minimums or negative maximums. Following \cite[Chapter 2]{fomenko}, we define the notion of $2$-atom.

\begin{definition}
A $2$-atom $P$ is a neighborhood of a critical fiber foliated by the level sets of $f$, considered up to fiber equivalence. 
\end{definition} 
That is, given a critical value $c$ of the function, a $2$-atom associated to the critical fiber is given by a connected component $P$ of $f^{-1}[c-\varepsilon,c+\varepsilon]\subset \Sigma$, for $\varepsilon>0$ sufficiently small so that there are no other critical values in $[c-\varepsilon,c+\varepsilon]$. Hence, a $2$-atom is a connected compact surface with boundary,  equipped with a foliation of the level-sets of $f$ and defined up fiber equivalence. Each connected component of the $2$-atom is determined by a graph, whose vertices are critical points and whose edges are the regular part of the critical level set, satisfying:
\begin{itemize}
\item $K=f^{-1}(c) \cap P$ is either a single point or a graph with all vertices of degree $4$,
\item each connected component of $P\setminus K$ is homeomorphic to $S^1\times (0,1]$, and each annulus is either positive or negative (according to the sign of $f-c$) in a way that each edge of $K$ is in the adherence of exactly one positive and one negative annulus.
\end{itemize}

When $K$ is a single point, the $2$-atom corresponds to a neighborhood of a maximum or minimum of the Morse function. Otherwise, each vertex of $K$ represents a hyperbolic critical point. Given the finite set of $2$-atoms of the function $f$, the whole manifold and foliation by level sets is recovered by gluing annuli(foliated by regular circles of $f$) along the corresponding boundary components of the $2$-atoms.\\

In our context, as the Morse function is transverse to the zero-section, zero is a regular value. We will say that a $2$-atom is \textit{positive} if $c>0$, and \textit{negative} if $c<0$. In the subsequent subsections, we will show that there exists a weakly gradient-like flow of positive (respectively negative) divergence on each positive (respectively negative) atom. We restrict to positive atoms since the argument is analogous for the negative ones.

\subsection{Local models in parts of $2$-atoms}

A positive $2$-atom $P$ can be broken into a finite number of  domains $D_1,...,D_n$ and $B_1,...,B_m$, where: 
\begin{itemize}
\item Each $D_i$ is homeomorphic to a closed disk, and is described in a Morse chart neighborhood of one of the critical points in $f^{-1}(c)$. For $\varepsilon$ small enough, the level sets $f^{-1}(\pm \varepsilon)$ intersect the Morse chart and we can choose $D_i$ such that its boundary is given by four arcs lying on the level sets $f^{-1}(c\pm\epsilon)$ and four straight segments (see Figure \ref{fig:model3}).
\item Each $B_j$ is diffeomorphic to $ [0,1]^2$, and satisfies $\overline{P\setminus \bigsqcup_{i=1}^n D_i}=\bigsqcup_{j=1}^m B_j$.
\end{itemize}
In terms of the graph $K$, the disks $D_i$ are neighborhoods of the vertices and the bands $B_j$ are neighborhoods of the edges in $P\setminus \sqcup D_i$.

\begin{figure}[!h]
%\centering
%\begin{subfigure}{.5\textwidth}
%  \centering
%\begin{tikzpicture} %%%%%%%%%%%%%%%%%first picture
%\draw [blue] (0,0) circle (2cm);
%  \node[scale=0.5,blue] at (2.1,2.1) {$f^{-1}(c-\epsilon)$};
%\foreach \x in {-5, ..., 5}{
%\draw[gray,-stealth, rotate=\x*36](0,0) -- (2.3,0);
%}
%\end{tikzpicture}
%  \caption{The model around an elliptic point}
%    \label{fig:model2}
%\end{subfigure}%
%\begin{subfigure}{.5\textwidth}
%  \centering
\begin{tikzpicture}[scale=1]
\draw [black] (-1,2) -- (1,2);
\draw [black] (2,1) -- (2,-1);
\draw [black] (-2,1) -- (-2,-1);
\draw [black] (-1,-2) -- (1,-2);
\draw[blue] (-2,1) arc (270:360:1);
\draw[teal] (1,2) arc (180:270:1);
\draw[blue] (2,-1) arc (90:180:1);
\draw[teal] (-1,-2) arc (0:90:1);
 \draw[teal] (2,1) -- (2.3,1);
\draw[teal] (1,2) -- (1,2.3);
 \node[scale=0.9,teal] at (2,2.2) {$f^{-1}(c+\epsilon)$};
  \node[scale=0.9,blue] at (-2,2.2) {$f^{-1}(c-\epsilon)$};
 \draw[teal] (-2,-1) -- (-2.3,-1);
 \draw[teal] (-1,-2) -- (-1,-2.3);
  \draw[blue] (-2,1) -- (-2.3,1);
  \draw[blue] (-1,2) -- (-1,2.3);
\draw[blue] (2,-1) -- (2.3,-1);
  \draw[blue] (1,-2) -- (1,-2.3);
%\draw[gray] (-2.3,-2.3) -- (2.3,2.3);
%\draw[gray] (-2.3,2.3) -- (2.3,-2.3);
\draw[-stealth] (-2.3,0) -- (2.3,0);
\draw[-stealth] (0,-2.3) -- (0,2.3);
\end{tikzpicture}

  \caption{The domain $D_i$.} 
  \label{fig:domain}
%\end{subfigure}
%\caption{The local model for the weakly negative-gradient vector field around an elliptic and hyperbolic point.}
%\label{fig:test}
\end{figure}
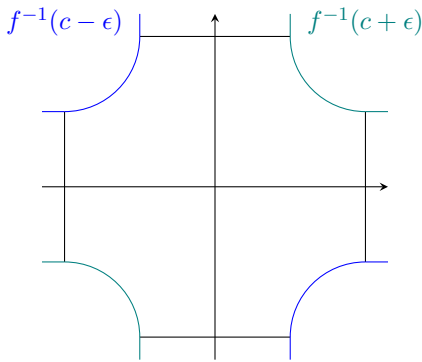

 The boundary of each band $B_j\cong [0,1]^2$ is composed of two segments that are each glued to the boundary segment of some disk, and two segments which are respectively in $f^{-1}(c-\varepsilon)$ and $f^{-1}(c+\varepsilon)$. \\

Let $D_i$ be one of the topological disks of a $2$-atom where $f$ has a critical point. We will define on $D_i$ a vector field and an area form.\\

\paragraph{\textbf{Local model near elliptic point.}}
If $D_i$ is a neighborhood of an elliptic point, the function necessarily has a maximum (since we are in a positive atom) and, by the Morse lemma, there are polar coordinates $(r,\theta)$ on a disk-like neighborhood of the critical point where $f=c-r^2$. In this case, we simply choose the negative gradient of $f$ with respect to the standard metric
$$ X= 2r\pp{}{r}.$$
For the standard area form $\omega=rdr\wedge d\theta$ the divergence is
$$ \operatorname{div}_\omega (X)= 4, $$
which is positive everywhere. \\

\paragraph{\textbf{Local model near hyperbolic points.}} If $D_i$ is a neighborhood of a hyperbolic point, we proceed as follows. The disk $D_i$ lies inside a Morse neighborhood $U$ of $f$ with coordinates $(x,y)$ where $f=c+(x+y)^2-(x-y)^2$. Up to redefining $D_i$ and the bands $B_j$, we can assume that the segments in the boundary of $D_i$ are parallel to $x=\pm \delta$ and $y=\pm \delta$. Consider the vector field 
\begin{align*}
 X&= -(x+y)\Big(\pp{}{x}+\pp{}{y}\Big)+2(x-y)\Big(\pp{}{x}-\pp{}{y}\Big) \\
 &= (x-3y)\pp{}{x} + (y-3x)\pp{}{y}
\end{align*}
which is clearly negative weakly gradient-like flow for $f$. Let $\omega=dx\wedge dy$ be the standard area form, and $X$ satisfies
$$ \operatorname{div}_\omega(X)= 2>0. $$
\begin{figure}[!h]
%\centering
%\begin{subfigure}{.5\textwidth}
%  \centering
%\begin{tikzpicture} %%%%%%%%%%%%%%%%%first picture
%\draw [blue] (0,0) circle (2cm);
%  \node[scale=0.5,blue] at (2.1,2.1) {$f^{-1}(c-\epsilon)$};
%\foreach \x in {-5, ..., 5}{
%\draw[gray,-stealth, rotate=\x*36](0,0) -- (2.3,0);
%}
%\end{tikzpicture}
%  \caption{The model around an elliptic point}
%    \label{fig:model2}
%\end{subfigure}%
%\begin{subfigure}{.5\textwidth}
%  \centering
\begin{tikzpicture}[scale=1.5]
\draw [black] (-1,2) -- (1,2);
\draw [black] (2,1) -- (2,-1);
\draw [black] (-2,1) -- (-2,-1);
\draw [black] (-1,-2) -- (1,-2);
\draw[blue] (-2,1) arc (270:360:1);
\draw[teal] (1,2) arc (180:270:1);
\draw[blue] (2,-1) arc (90:180:1);
\draw[teal] (-1,-2) arc (0:90:1);
 \draw[teal] (2,1) -- (2.3,1);
\draw[teal] (1,2) -- (1,2.3);
 \node[scale=0.8,teal] at (1.6,2.3) {$f^{-1}(c+\epsilon)$};
  \node[scale=0.8,blue] at (-1.6,2.3) {$f^{-1}(c-\epsilon)$};
 \draw[teal] (-2,-1) -- (-2.3,-1);
 \draw[teal] (-1,-2) -- (-1,-2.3);
  \draw[blue] (-2,1) -- (-2.3,1);
  \draw[blue] (-1,2) -- (-1,2.3);
\draw[blue] (2,-1) -- (2.3,-1);
  \draw[blue] (1,-2) -- (1,-2.3);
\draw[gray] (-2.3,-2.3) -- (2.3,2.3);
\draw[gray] (-2.3,2.3) -- (2.3,-2.3);
\draw[-stealth] (-2.3,0) -- (2.3,0);
\draw[-stealth] (0,-2.3) -- (0,2.3);
\foreach \x in {0, ..., 3}{
\draw[gray, rotate=\x*90] (2,1.7) ..controls +(-1.25,-1.25) and +(-1.25,1.25).. (2,-1.7);
}
\draw[orange] (1.7,-1.06) -- (1.7,1.06);
\fill [shift={(1.7,0.0)},scale=0.08,rotate=-90,orange]   (0,0) -- (-1,-0.7) -- (-1,0.7) -- cycle; 
\draw[orange] (1.85,-1.03) -- (1.85,1.03);
\fill [shift={(1.85,0.0)},scale=0.08,rotate=-90,orange]   (0,0) -- (-1,-0.7) -- (-1,0.7) -- cycle; 
\draw[orange] (-1.7,-1.06) -- (-1.7,1.06);
\fill [shift={(-1.7,0.0)},scale=0.08,rotate=90,orange]   (0,0) -- (-1,-0.7) -- (-1,0.7) -- cycle; 
\draw[orange] (-1.85,-1.03) -- (-1.85,1.03);
\fill [shift={(-1.85,0.0)},scale=0.08,rotate=90,orange]   (0,0) -- (-1,-0.7) -- (-1,0.7) -- cycle; 
%%%%%%%%%%%%%%%%%%
\draw[orange] (-1.06,1.7) -- (1.06,1.7);
\fill [shift={(0,1.7)},scale=0.08,rotate=180,orange]   (0,0) -- (-1,-0.7) -- (-1,0.7) -- cycle; 
\draw[orange] (-1.03,1.85) -- (1.03,1.85);
\fill [shift={(0,1.85)},scale=0.08,rotate=180,orange]   (0,0) -- (-1,-0.7) -- (-1,0.7) -- cycle; 
\draw[orange] (-1.06,-1.7) -- (1.06,-1.7);
\fill [shift={(0,-1.7)},scale=0.08,rotate=0,orange]   (0,0) -- (-1,-0.7) -- (-1,0.7) -- cycle; 
\draw[orange] (-1.03,-1.85) -- (1.03,-1.85);
\fill [shift={(0,-1.85)},scale=0.08,rotate=0,orange]   (0,0) -- (-1,-0.7) -- (-1,0.7) -- cycle; 
%%%%%%arrows
\fill [shift={(-0.7,0.7)},scale=0.08,rotate=135,gray]   (0,0) -- (-1,-0.7) -- (-1,0.7) -- cycle; 
\fill [shift={(-0.7,-0.7)},scale=0.08,rotate=45,gray]   (0,0) -- (-1,-0.7) -- (-1,0.7) -- cycle; 
\fill [shift={(0.7,-0.7)},scale=0.08,rotate=-45,gray]   (0,0) -- (-1,-0.7) -- (-1,0.7) -- cycle; 
\fill [shift={(0.7,0.7)},scale=0.08,rotate=-135,gray]   (0,0) -- (-1,-0.7) -- (-1,0.7) -- cycle; 
\fill [shift={(0,-1.07)},scale=0.08,rotate=0,gray]   (0,0) -- (-1,-0.7) -- (-1,0.7) -- cycle; 
\fill [shift={(1.07,0)},scale=0.08,rotate=-90,gray]   (0,0) -- (-1,-0.7) -- (-1,0.7) -- cycle; 
\fill [shift={(0,1.07)},scale=0.08,rotate=180,gray]   (0,0) -- (-1,-0.7) -- (-1,0.7) -- cycle; 
\fill [shift={(-1.07,0)},scale=0.08,rotate=90,gray]   (0,0) -- (-1,-0.7) -- (-1,0.7) -- cycle; 
\end{tikzpicture}

  \caption{The model around a hyperbolic point. In green and blue the level-set $f^{-1}(c\pm\epsilon)$ and in orange the cut-off region where the vector field is parallel to the straight boundary segment.} 
  \label{fig:model3}
%\end{subfigure}
%\caption{The local model for the weakly negative-gradient vector field around an elliptic and hyperbolic point.}
%\label{fig:test}
\end{figure}
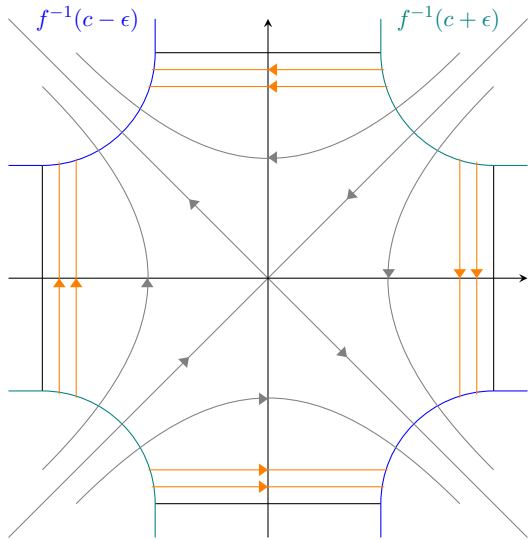

We want to modify $X$ near $x=\pm \delta$ and $y=\pm \delta$ so that it is parallel respectively to the $y$ and $x$ axis there. We will explain how to do it near $x= \pm \delta$ and in the $y$ coordinate it is done analogously. Choose $\delta_1, \delta_2$ positive such that $\delta_1<\delta_2<\delta$, and take the region $V_+=\{x\in [\delta_1,\delta] \}\cap D_i$. There, the vector field is of the form
\begin{equation}\label{eq:X}
 X= g \pp{}{x} + h \pp{}{y},
\end{equation}
with $h<0$ in $V_+$ and $\pp{g}{x}+ \pp{h}{y}>0$ in all $D_i$. Let $\varphi_1, \varphi_2 \in C^\infty(\R)$ denote cut-off functions such that
\begin{equation*}
\begin{cases}
\varphi_1(x)&=0 \text{ for } x\leq \delta_1,\\
\varphi_1(x)&=1 \text{ for } x\geq \delta_2,\\
\varphi_2(x)&=1 \text{ for } x\leq \delta_2,\\
\varphi_2(x)&=0 \text{ for } x\geq \delta-\delta' \text{ for some very small } \delta'>0.
\end{cases} 
\end{equation*}

\begin{figure}[h!]
\begin{tikzpicture}[scale=2]
\draw[->] (-0.5,0) -- (3,0);
\draw[->] (0,-0.3) -- (0,1.2);
\draw[blue] (-0.5,0.01) -- (0.5,0.01);
\draw[blue] (0.5,0.01) ..controls +(0.5,0) and + (-0.5,0).. (1.5,1.005);
\draw[blue] (1.5,1.005) -- (2.8,1.005);
 \node[scale=1,blue] at (2.7,0.8) {$\varphi_1$};
\draw[teal] (-0.5,1) -- (1.6,1);
\draw[teal] (1.6,1) ..controls +(0.5,0) and + (-0.5,0).. (2.6,0.01);
\draw[teal] (2.6,0.01) -- (2.8,0.01);
 \node[scale=1,teal] at (2.7,0.2) {$\varphi_2$};
\draw (0.4,-0.05) -- (0.4,0.05);
 \node[scale=1] at (0.4,-0.2) {$\delta_1$};
\draw (1.55,-0.05) -- (1.55,0.05);
 \node[scale=1] at (1.55,-0.2) {$\delta_2$};
\draw (2.7,-0.05) -- (2.7,0.05);
 \node[scale=1] at (2.7,-0.2) {$\delta$};
\end{tikzpicture}
\caption{The functions $\varphi_1$ and $\varphi_2$.}
\end{figure}
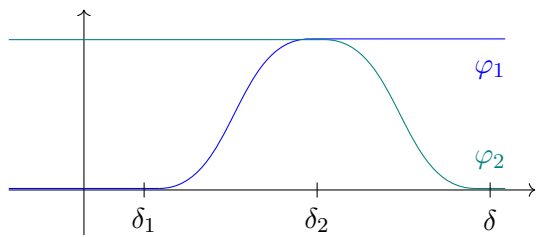
 We choose a function $u(y)$ that is negative for $y\in [-\delta,\delta]$ and such that $u'(y)\gg\varphi_2'(x)$. In $V_+$, we modify the vector field to
$$ Y= \varphi_2(x)g\pp{}{x} + \Big(h+\varphi_1(x) u(y)\Big) \pp{}{y},$$
which glues well with the previously defined vector field near $x=\delta_1$.  It still satisfies that the $\pp{}{y}$ component is negative, and the $\pp{}{x}$ component is zero near $x=\delta$. Furthermore, the divergence is given by
\begin{equation*}
\operatorname{div}_\omega(Y)= 
\begin{cases}
\pp{g}{x} + \pp{h}{y}+ \varphi_1(x) \cdot u'(y) >0, \enspace \text{for } x\in [\delta_1,\delta_2],\\
\varphi_2(x) \pp{g}{x} + \varphi_2'(x)g + \pp{h}{y}+u'(y)>0, \enspace \text{for } x\in [\delta_2,\delta],
\end{cases}
\end{equation*}
where in the last equation we used that $u'(y)$ is positive and can be taken arbitrarily large. We emphasize that for $x$ close to $\delta$, $Y=(h+u)\frac{\partial}{\partial y}$ and $\dive_\omega Y=\frac{\partial (h+u)}{\partial y} >0$.\\

We can now proceed analogously in $V_-=\{x\in [-\delta, -\delta_1]\} \cap D_i$. We keep denoting $X$ in the form \eqref{eq:X}, where now $h>0$ in $V_-$. We choose cutoff functions $\phi_1$ and $\phi_2$ such that
 
 \begin{equation*}
 \begin{cases}
\phi_1(x)&=0 \text{ for } x\geq -\delta_1,\\
\phi_1(x)&=1 \text{ for } x\leq -\delta_2,\\
\phi_2(x)&=1 \text{ for } x\geq -\delta_2,\\
\phi_2(x)&=0 \text{ for } x\leq -\delta+\delta' \text{ for some very small } \delta'>0.
\end{cases} 
\end{equation*}
Let $v(y)$ be a positive function with some arbitrarily large positive derivative in $y\in[-\delta,\delta]$. We change the vector field to
$$Y'= \phi_2(x)g\pp{}{x} + \Big(h+\phi_1(x) v(y)\Big)\pp{}{y}.$$
This vector field glues well with $X$, it always have a positive $\pp{}{y}$-component and near $x=\delta$ it is parallel to $\pp{}{y}$. Furthermore, the divergence of $Y'$ satisfies
\begin{equation*}
\operatorname{div}_\omega(Y')= 
\begin{cases}
\pp{g}{x}+ \pp{h}{y}+ \phi_1(x)\cdot v'(y) >0, \enspace \text{for } x\in[-\delta_2,-\delta_1],\\
\phi_2(x) \pp{g}{x} + \phi_2'(x)g + \pp{h}{y}+v'(y)>0, \enspace \text{for } x\in[-\delta,-\delta_2].
\end{cases}
\end{equation*}
where we used in the second equation that $v'(y$) can be taken arbitrarily large. As before, we emphasize that for $x$ close to $-\delta$, $Y=(h+v)\frac{\partial}{\partial y}$ and $\dive_\omega Y=\frac{\partial (h+v)}{\partial y} >0$.\\

Arguing analogously near $y=\pm\delta$, we construct a vector field $X$ in $D_i$ that is negative weakly gradient-like for the Morse function, has everywhere positive divergence, and that is parallel to the boundary segments that will be glued to the bands $B_j$.

\begin{remark}
If the $2$-atom is negative, we want a vector field that has negative divergence everywhere. The same model can be constructed, taking first a vector field of the form
\begin{align*}
 X&= -2(x+y)\Big(\pp{}{x}+\pp{}{y}\Big)+(x-y)\Big(\pp{}{x}-\pp{}{y}\Big) \\
  &=(-3y-x)\pp{}{x} + (-3x-y)\pp{}{y} ,
 \end{align*}
which is again negative weakly gradient-like for the Morse function but has divergence equal to $-2$ instead (with respect to the standard area form in the disk). The whole construction is then analogous, but the functions $u(y)$ and $v(y)$ are chosen with large negative derivative instead.
\end{remark}

\subsection{Characteristic foliation on each atom}

We will now glue the vector fields and area-forms that we constructed near the hyperbolic points adapting a standard trick \cite[page 231]{Ge}. Every boundary segment of $\bigsqcup_{i=1}^n D_i$ is connected to another boundary segment through some band $B_j$. Let $B\cong [0,1]^2$ be one of the bands endowed with coordinates $(t,z)$. We can assume that the level sets of $f$ correspond to the level sets of $z$, and that $f>c$ in $z>0$ and $f<c$ in $z<0$. We might extend the vector field and area-form constructed in each attached disk to obtain a pair $(X_0, \omega_0)$ and $(X_1, \omega_1)$ defined respectively near $t=0$ and near $t=1$. The vector fields $X_0$ and $X_1$ are of the form 
$$ X_0|_{\{t=0\}}= g_0 \pp{}{z}, \qquad X_1|_{\{t=1\}}=g_1 \pp{}{z}, $$
where $g_0$ and $g_1$ are both negative functions. Furthermore we have 
$$ \operatorname{div}_{\omega_0}(X_0)>0, \qquad \operatorname{div}_{\omega_1}(X_1)>0. $$

 We might assume that $dt\wedge dz$ induces the same orientation as $\omega_0$ and $\omega_1$. We first interpolate the area form $\omega_0$ to the constant area form $dt \wedge dz$ close to $\{t=0\}$ while having the condition of positive divergence of $X_0$ being satisfied. We consider a positive cut-off function $\phi$ such that $\phi(x)=1$ close to $x=0$ and $\phi(x)=0$ for $x$ close to $\delta$ (for some small $\delta>0$). We consider the area form given by $\omega= (c_0\phi(t)+(1-\phi(t)) dt\wedge dz$. The divergence of $X_0$ with respect to $\omega$ is given by a positive multiple of 
 $$\phi (t) \Big(\frac{\partial c_0}{\partial z}g_0+c_0 \frac{\partial g_0}{\partial z}\Big)+\Big(1-\phi(t)\Big)\frac{\partial g_0}{\partial z}.$$
 The first term is given by a positive multiple of $\dive_{\omega_0} X_0$ and hence is positive. The second term is also positive. Indeed, we can assume that the coordinate $z$ coincides with the coordinate defining the boundary segment on any of the disks $D_i$ that lie in the boundary of the band. By construction, the component of $X$ with respect to that coordinate has a positive derivative, as we emphasized in the previous section, and thus $\pp{g_0}{z}>0$ near $t=0$. The same construction can be done for $(X_1,\omega_1)$ close to $\{t=1\}$.
We thus obtain a globally defined area form $\omega$ and we take a vector field $$X=g\frac{\partial}{\partial z},$$ with $\frac{\partial g}{\partial z}>0$  and $g=g_0$ and $g=g_1$ respectively near $t=0$ and $t=1$. The vector field $X$ is a negative weakly gradient-like vector field for the Morse function, and for $t\in(\delta,1-\delta)$ it further satisfies
$$ \operatorname{div}_\omega(X)=\pp{g}{z} >0.$$

Doing this construction along each band, we construct a negative weakly gradient flow $X$ and an area form $\omega$ on the positive $2$-atom $P$ such that $\operatorname{div}_\omega(X)>0$ everywhere.

\subsection{Conclusion}

We have now constructed a vector field and an area form such that the vector field has positive (respectively negative) divergence in each positive (respectively negative) $2$-atom of the Morse function. {It only remains to extend this vector field and area form along the annuli, foliated by regular level sets of $f$, whose boundary are boundary components of $2$-atoms.  We will adapt an argument from \cite[p. 231]{Ge} to obtain an area form $\omega$ and vector field $X$ that interpolate between the ones that are given on the boundary of the annulus so that $\operatorname{div}_\omega(X)=0$ only at the zero level set.

 When we glue together two positive atoms or two negative atoms, we can adapt the argument as follows. Assume that the annulus $S^1 \times [-1,1]$ has both boundary components on positive $2$-atoms. Let $X_1$ be the weakly gradient like vector field and $\Omega_1$ the area form defined near $S^1\times \{-1\}$, and $X_2$ and $\Omega_2$ the vector field and area form defined near $S^1\times \{1\}$. We can assume that the coordinates $(\theta,s)$ of $S^1\times [-1,1]$ are such that $X_1=-\frac{\partial}{\partial s}$  near $t=-1$ and $X_2=-\frac{\partial}{\partial s}$ near $t=1$. We will also assume that $d\theta \wedge ds$ induces the same orientation as $\Omega_1$ and $\Omega_2$. Let $\Omega$ be an area form such that $\Omega= \Omega_1$ for $s\in (\epsilon,1)$ and $\Omega=\Omega_2$ for $s\in (-1,-\epsilon)$, $\epsilon>0$ small. The divergence of $X=\pp{}{s}$ with respect to $g(s)\Omega$ where $g\in C^\infty((-1,1),\R)$ is given by 
$$\dive_{g\Omega}X=\frac{-g'(s)}{g(s)}+\dive_\Omega(X).$$
As $\dive_\Omega(X)$ is bounded, to obtain positive divergence it is sufficient to take a function $g$ with a large negative derivative. We thus take the function $g$ to be decreasing such that $g(s)=K$ for $s\in (-1,\epsilon)$ where $K$ is a big constant and $g(s)=1$ when $s\in (\epsilon,1)$. Note that the area form $g\Omega$ only glues with $\Omega_1$ up to multiplication of a constant. This is not a problem in our construction as the area form constructed in the $2$-atom can be changed by multiplying it by the same constant, and the vector field $X_1$ will still have everywhere positive divergence. The same construction holds when gluing two negative $2$-atoms. 

When we are gluing a positive atom with a negative one (this implies that there is a circle in the annulus that corresponds to a component of the zero level set of the function), we are exactly in the case treated in \cite[p 231]{Ge}, that we sketch for completeness. Keeping the notation as above, assume that the circle in the zero level set of the function is given by $s=0$, choose some area-form $\Omega'$ in $s\in (-\varepsilon/2,\varepsilon/2)$ for $\epsilon>0$ small enough, for which $\operatorname{div}(X)$ vanishes exactly at $s=0$ and is positive or negative respectively for $s>0$ and $s<0$. We might now choose any area-form $\Omega$ that coincides with $\Omega_1,\Omega', \Omega_2$ respectively for $s\in [-1,-\varepsilon]$, $s\in [-\varepsilon/2, \varepsilon/2]$ and $s\in [\varepsilon,1]$.  Up to choosing $\Omega$ that coincides with suitable positive constant multiples of $\Omega_1$, $\Omega'$ and $\Omega_2$, we construct an area-form $g(s)\Omega$, where $g(s)$ is a positive function that is constant for $|s| \not \in [\varepsilon/2,\varepsilon]$, and either decreases (respectively increases) fast enough for $s\in (-\varepsilon, -\varepsilon/2)$ (respectively for $s\in (\varepsilon/2,\varepsilon)$).

After doing this at each boundary circles of the domains $S_i$, we obtain a globally defined vector field $X$ in $\Sigma$, endowed with an area form $\omega$ such that $X$ is divided by the original dividing set (as in Definition \ref{def:div}), and $X$ is weakly gradient-like for the Morse function $f$. This finishes the proof of Theorem \ref{thm:main}.

\begin{remark}\label{rem:Alter}
As we mentioned in the introduction, one can use classical results in convex surface theory to prove Theorem \ref{thm:1}, we sketch here the argument for completeness. Let $f$ be a Morse function satisfying the hypothesis of the theorem. Choose any Riemannian metric $g$ in $\Sigma$ such that $X=-\operatorname{grad}_g f$ has either positive (when $f$ is positive there) or negative (when $f$ is negative there) divergence with respect to some area-form at the singular points of $f$. This is a sufficient condition for the existence of a (not necessarily $\mathbb{R}$-invariant) contact form $\alpha$ in $\Sigma \times \mathbb{R}$ such that the characteristic foliation along $\Sigma \times \{0\}$ is $X$ (see Lemma 4.6.3 in \cite{Ge}). Since $X$ is in particular of ``almost" Morse-Smale type, the proof of \cite[Section II, Proposition 2.6]{G1} (see also Proposition 4.8.7 and Remark 4.8.9 in \cite{Ge}) shows that $X$ is divided by some dividing set. An inspection of the proof shows that, in our case, the dividing set can be chosen to be the zero level set of $f$. This proves Theorem \ref{thm:main} and hence also Theorem \ref{thm:1}. 
\end{remark}

\section{Homotopical classification using Morse functions}

Given a closed surface $\Sigma$, a dividing set $\Gamma$, and a sign to each side of $\Gamma$, a germ of $\mathbb{R}$-invariant contact structure $\xi_\Gamma$ in $M=\Sigma \times \mathbb{R}$ is uniquely determined, up to isotopy, by the set of cooriented curves $\Gamma$ \cite{G1}. The homotopy class of a plane field $\xi$ in $M$ is determined by the homotopy class of any vector field $Y$ such that $Y\oplus \xi=TM$. Fixing a trivialization of the unit tangent bundle $TSM\cong M\times S^2$, the homotopy class of $Y$ (and thus also the homotopy class of $\xi$) is given by the degree of the Gauss map induced by $Y$, a map 
$$ f: M \longrightarrow S^2. $$
We call this map the Gauss map associated with $\xi$, as in \cite[p. 137]{Ge}. In this section, we will apply Theorem \ref{thm:1} to a completely explicit height Morse function to compute the degree of the Gauss map associated with $\xi_{\Gamma}$. We will give an explicit embedding 
$$e:\Sigma \longrightarrow \mathbb{R}^3,$$
for which the coordinate $z$ induces a Morse function on $\Sigma$, and it is satisfied that
\begin{align*}
\Sigma\cap \{z=0\}&=\Gamma, \\
\Sigma \cap \{z>0\}&=\Sigma_+, \\
\Sigma \cap \{z<0\}&=\Sigma_-. 
\end{align*}
Abusing notation, we will denote by $\Sigma_+$ and $\Sigma_-$ the closure of the respective open surface. We choose an embedding such that each connected component of $\Sigma_+$ and $\Sigma_-$ is embedded in a ``standard" way for which $z$ induces a Morse function $h$. In particular,  on each connected component of $\Sigma_+$ or $\Sigma_-$, the function $h$ has a single elliptic point (which is either a maximum or a minimum) and its boundary is the zero level set. The zero level set is the minimum or maximum of the height function in that component of $\Sigma_+$ or $\Sigma_-$. Doing this for each connected component of $\Sigma_+$ and $\Sigma_-$, carefully matching the boundary circles, we construct a completely explicit embedding of $\Sigma$, see for example Figure \ref{fig:morse}.

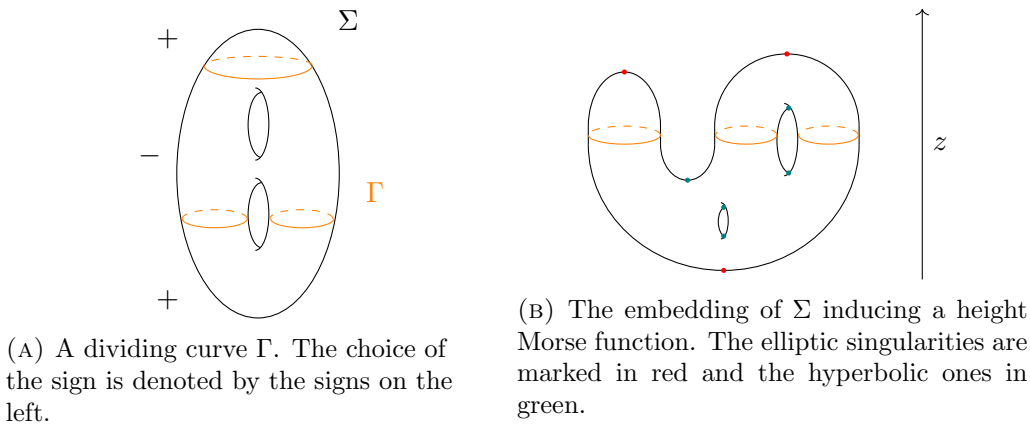
\begin{figure}[!h]
\centering
\begin{subfigure}{.5\textwidth}
  \centering
%%%%%%first picture 
\begin{tikzpicture}[scale=1.2]
%Torus
\draw (0,0) ellipse (.9 and 1.6);
 \node at (1,1.7) {$\Sigma$};
 \node at (-1,1.5) {$+$};
 \node at (-1.2,0.2) {$-$};
 \node at (-1,-1.4) {$+$};
\draw (0.04,0.9) arc (90:265:.15 and .36);
\draw (-0.03,0.15) arc (-90:90:.15 and .4);
\draw (0.04,-0.1) arc (90:265:.15 and .36);
\draw (-0.03,-0.85) arc (-90:90:.15 and .4);
 \node[orange] at (1.3,-0.2) {$\Gamma$};
 \draw[orange] (-0.6,1.2) arc (180:360:.6 and .15);
\draw[orange, dashed] (0.6,1.2) arc (0:180:.6 and .1);
\draw[orange] (-0.84,-0.5) arc (180:360:.36 and .1);
\draw[orange,dashed] (-0.12,-0.5) arc (0:180:.36 and .1);
\draw[orange] (0.14,-0.5) arc (180:360:.35 and .1);
\draw[orange, dashed] (0.83,-0.5) arc (0:180:.35 and .1);
%\draw[orange] (-0.6,-1.2) arc (180:360:.6 and .15);
%\draw[orange, dashed] (0.6,-1.2) arc (0:180:.6 and .1);
\end{tikzpicture}
  \caption{A dividing curve $\Gamma$. {The choice of \\ the sign is denoted by the signs on the\\ left.}}
    \label{fig:divcurv}
\end{subfigure}%
\begin{subfigure}{.5\textwidth}
  \centering
\begin{tikzpicture}[scale=1.2]
%height
\draw[->] (3.7,-1.5)--(3.7,1.5);
\node at (3.9,0) {$z$};

\draw (0,0) arc (-180:0:1.5 and 1.4);
\draw (0.8,0.2) arc (0:180:0.4 and 0.6);
\draw (0,0.2) -- (0,0);
\draw (0.8,0.2) -- (0.8,0);
\draw (3,0.2) arc (0:180:0.8 and 0.8);
\draw (3,0.2) -- (3,0);
\draw (1.4,0.2) -- (1.4,0);
\draw (0.8,0) arc (-180:0:0.3 and 0.4);
\draw (2.24,0.4) arc (90:265:.15 and .36);
\draw (2.17,-0.35) arc (-90:90:.15 and .4);
\draw (1.52,-0.7) arc (90:265:.08 and .16);
\draw (1.47,-1.05) arc (-90:90:.08 and .2);
\draw[orange] (0,0.1) arc (180:360:0.4 and .1);
\draw[orange, dashed] (0.8,0.1) arc (0:180:.4 and .1);
\draw[orange] (2.33,0.1) arc (180:360:0.33 and .1);
\draw[orange, dashed] (2.99,0.1) arc (0:180:.33 and .1);
\draw[orange] (1.41,0.1) arc (180:360:0.34 and .1);
\draw[orange, dashed] (2.09,0.1) arc (0:180:.34 and .1);
\fill[red] (0.4,0.8) circle[radius=0.8pt];
\fill[red] (2.2,1) circle[radius=0.8pt];
\fill[red] (1.5,-1.4) circle[radius=0.8pt];
\fill[teal] (1.1,-0.4) circle[radius=0.8pt];
\fill[teal] (2.22,-0.32) circle[radius=0.8pt];
\fill[teal] (2.22,0.4) circle[radius=0.8pt];
\fill[teal] (1.5,-1.02) circle[radius=0.8pt];
\fill[teal] (1.5,-0.7) circle[radius=0.8pt];
\end{tikzpicture}
  \caption{The embedding of $\Sigma$ inducing a height Morse function. The elliptic singularities are marked in red and the hyperbolic ones in green.}
  \label{fig:embed}
\end{subfigure}
\caption{Morse height function associated to a dividing set.}
\label{fig:morse}
\end{figure}

The embedding of $\Sigma$ into $\mathbb{R}^3$ inducing the height Morse function $h$ fixes a trivialization of $TM|_{\Sigma}$ given by the standard unit sphere in $\mathbb{R}^3$. The contact structure is defined, up to isotopy by the kernel of the contact form
$$ \alpha= h dt + \iota_X\omega, $$
where $t$ is the coordinate in the second factor of $\Sigma \times \mathbb{R}$.
To compute the degree of the Gauss map associated with $\xi_{\Gamma}=\ker \alpha$, we will first choose a section $Y$ of $TM|_{\Sigma\times \{0\}}$ that is everywhere transverse to $\xi_{\Gamma}$ (positively with respect to $\alpha$). Let $A$ be a small open neighborhood of $\Gamma=\{h=0\}$, and {we see $\Sigma$ as an embedded submanifold in $\mathbb{R}^3$ with the embedding inducing the Morse function $h$}. Away from $A$, we can choose $Y|_{\Sigma_+}=\nu$ and $Y|_{\Sigma_-}=-\nu$, where $\nu$ denotes an orthonormal section (with respect to the standard metric in $\mathbb{R}^3$) of $T\Sigma$ pointing outwards the boundary of the compact domain bounded by $\Sigma \subset \mathbb{R}^3$. It remains to extend this section along $TM|_A$. Seeing each component of $A$ as a cylinder in $\mathbb{R}^3$, the section $\nu$ points outwards on one side of a component of $\Gamma$ and inwards on the other side of the component $\Gamma$. Furthermore, since $h=0$ along $\Gamma$ and $\alpha$ is non-vanishing, $\iota_X\omega$ induces an orientation on $\Gamma$. Hence, we extend the section $Y$ along $A$ by rotation around the $z$-axis, such that $Y|_\Sigma$ is tangent to $\Sigma$ and pairs positively with $\beta$. \\

Having defined $Y$, we look at it as a section of $ST\Sigma$, and it defines a map
$$ f: \Sigma \longrightarrow S^2.$$
We denote by $e_+$ (respectively $e_-$) the number of positive (respectively negative) elliptic points and by $h_+$ (respectively $h_-$) the number of positive (respectively negative) hyperbolic points. The genus of $\Sigma_+$ (respectively $\Sigma_-$) is denoted by $g_+$ (respectively $g_-$).

We have the following formulae:

\begin{itemize}
\item $e_\pm= \#\Sigma_\pm$,
\item  $h_\pm= \#\partial \Sigma_\pm - \#\Sigma_\pm + 2 g_\pm$.
\end{itemize}

\begin{lemma}
The degree of the Gauss map $f$ associated with $\xi|_\Gamma$ is given by $$\deg(f)=\frac{1}{2}(\chi(\Sigma_+)-\chi(\Sigma_-)).$$
\end{lemma}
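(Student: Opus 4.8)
The plan is to compute $\deg(f)$ as a signed count of preimages of a regular value, carried out simultaneously at the north pole $p_N=(0,0,1)$ and the south pole $p_S=(0,0,-1)$, and then to add the two counts. First I would record the algebraic consequence of the two displayed formulae together with the Euler characteristic of a surface with boundary: since $\chi(\Sigma_\pm)=2\#\Sigma_\pm-2g_\pm-\#\partial\Sigma_\pm$ and $e_\pm=\#\Sigma_\pm$, $h_\pm=\#\partial\Sigma_\pm-\#\Sigma_\pm+2g_\pm$, one gets $\chi(\Sigma_\pm)=e_\pm-h_\pm$. Hence it suffices to prove $2\deg(f)=e_+-e_--h_++h_-$.

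Next I would localize the preimages of the poles. Because $\Gamma=\{h=0\}$ is a regular level and $A$ is a small neighborhood of it, every critical point of $h$ lies outside $A$, where $Y=\nu$ on $\Sigma_+$ and $Y=-\nu$ on $\Sigma_-$. Inside $A$ the section $Y$ is produced by rotating $\nu$ into $-\nu$ about the $z$-axis, so it stays horizontal and never equals $(0,0,\pm 1)$; thus $p_N,p_S$ are regular values whose $f$-preimages are exactly the critical points of $h$, where the tangent plane is horizontal and $\nu=(0,0,\pm1)$. At such a point the local degree of the Gauss map $\nu$ is the sign of the Gaussian curvature: $+1$ at the elliptic points (maxima/minima, $K>0$) and $-1$ at the hyperbolic points (saddles, $K<0$). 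On $\Sigma_-$ one has $f=a\circ\nu$ with $a$ the antipodal map, and since $\deg(a)=-1$ on $S^2$, every local degree coming from $\Sigma_-$ is reversed. I would fix orientations by the outward normal on $\Sigma$, so that this convention is consistent with $\deg(\nu)=\tfrac12\chi$ for a closed surface.

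With these rules I would tabulate the contributions. Writing $h_\pm^N$ (resp. $h_\pm^S$) for the number of saddles in $\Sigma_\pm$ whose normal points up (resp. down), the positive elliptic points (maxima, $\nu=p_N$) contribute $+e_+$ and the negative elliptic points (minima, where $-\nu=p_N$) contribute $-e_-$, so the count at $p_N$ equals $e_+-e_--h_+^N+h_-^N$, while the count at $p_S$ equals $-h_+^S+h_-^S$. Both counts equal $\deg(f)$, so their sum is $2\deg(f)$, and the unknown up/down splitting recombines: $2\deg(f)=e_+-e_--(h_+^N+h_+^S)+(h_-^N+h_-^S)=e_+-e_--h_++h_-=\chi(\Sigma_+)-\chi(\Sigma_-)$, which is the claim.

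The main obstacle is precisely the sign bookkeeping at the hyperbolic points: the vertical direction of the normal at a saddle is not determined by the combinatorial data $(e_\pm,h_\pm)$, so a single pole does not suffice to evaluate the degree. The device that removes this difficulty is to compute at both poles and add, letting the ambiguous quantities $h_\pm^N,h_\pm^S$ merge into $h_\pm$; the only remaining care is verifying that $Y$ genuinely stays horizontal across $A$ (so the poles are regular values) and keeping the orientation conventions for $\nu$ and for $a$ consistent throughout.
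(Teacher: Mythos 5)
Your proof is correct, and its counting device is genuinely different from the paper's. The paper evaluates the degree at the \emph{south pole only}: in the standard embedding, all positive maxima and all negative minima send $Y$ to the north pole, so the south-pole preimages are purely hyperbolic; but this forces the paper to analyze how the saddle normals are distributed in the explicit embedding --- it observes that of the $2g_\pm$ handle saddles exactly half have Gauss image at the south pole, obtains $\deg(f)=h_--h_+-g_-+g_+$, and concludes with genus/boundary bookkeeping via $\#\partial\Sigma_+=\#\partial\Sigma_-$. Your two-pole summation eliminates precisely this embedding-specific input: the unknown up/down splittings recombine as $h_\pm^N+h_\pm^S=h_\pm$, so no analysis of where individual saddle normals point is needed, and your Morse-theoretic identity $\chi(\Sigma_\pm)=e_\pm-h_\pm$ replaces the paper's genus count. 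What the paper's route buys is a computation at a single regular value and a concrete picture of which critical points the south pole ``sees''; what yours buys is robustness, since it works for any embedding realizing $(\Sigma_\pm,\Gamma)$ with one elliptic point per component, without the ``half of the handle saddles'' observation. The sign analysis (local degree $=\operatorname{sign}K$ for $\nu$, reversed on $\Sigma_-$ by the antipodal map) matches the paper's ``orientation-reversing at positive, orientation-preserving at negative hyperbolic points''.

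Two minor points. First, there is a harmless label swap in your tabulation: on $\Sigma_-$ the preimage of $p_N$ under $f=-\nu$ consists of saddles whose normal points \emph{down}, so the count at $p_N$ should read $e_+-e_--h_+^N+h_-^S$ and the count at $p_S$ should read $-h_+^S+h_-^N$; since you add the two counts, the total $2\deg(f)=e_+-e_--h_++h_-$ is unaffected. Second, your remaining verification --- that $Y$ stays horizontal across $A$, so that $\pm(0,0,1)$ are regular values with preimages exactly the critical points of $h$ --- is indeed satisfied by the paper's construction, which rotates $\nu$ into $-\nu$ within horizontal planes along the locally vertical cylinders around $\Gamma$.
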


\begin{proof}
 By the local degree theorem, the degree is given by the sum of the local degree over all the preimages of the south pole.  Recall that the plane field $\xi_\Gamma$ is parallel  to $T\Sigma$ at the singular points of the characteristic foliation, hence at the singular points of the Morse function. The orientation of $\xi_\Gamma$ is the same as that of $T\Sigma$ (i.e. it points outwards) at the singular points in $\Sigma_+$, and has the opposed orientation at the singular points in $\Sigma_-$. We observe that for $\Sigma_\pm$ of genus $g$, there exist additionally $2g$-hyperbolic points, but only for half of them the Gauss map points to the south pole.
 
 \begin{figure}[!h]
 \includegraphics[scale=0.3]{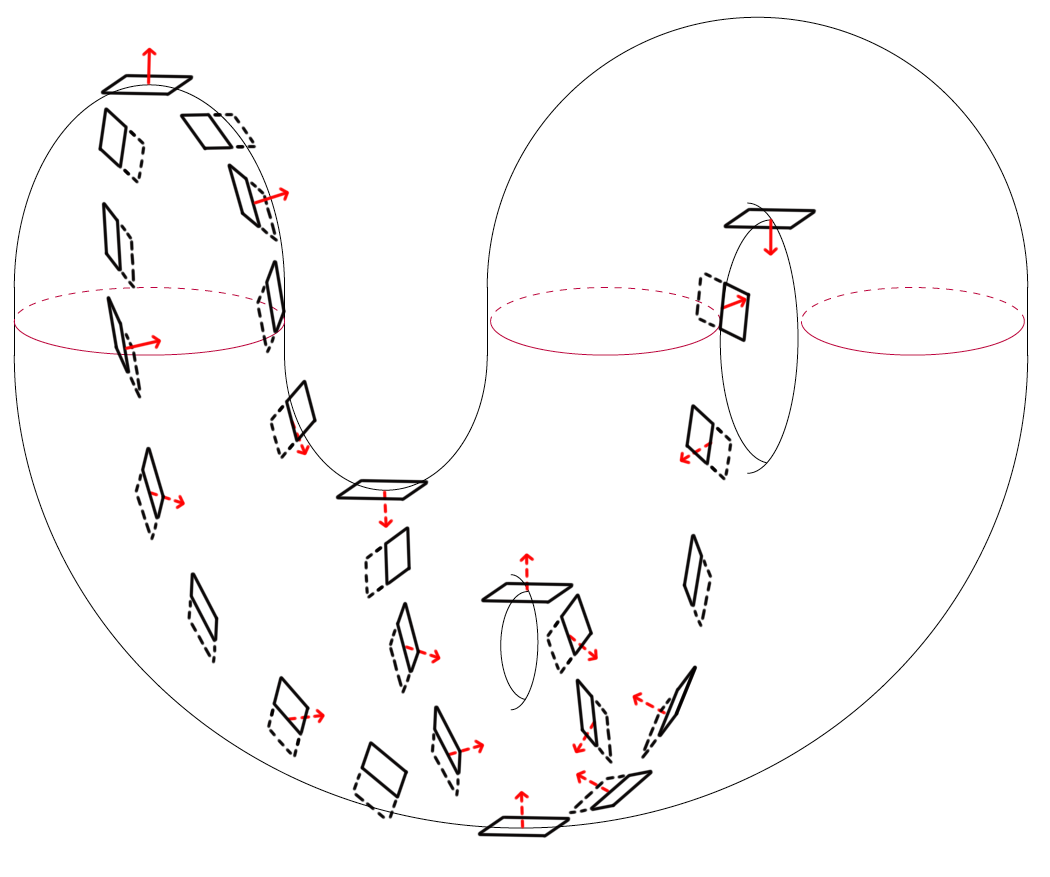}
 \caption{An example of an explicit Morse function and its cooriented contact structure $\xi_\Gamma$. A dashed (non-dashed) red arrow indicates that the vector orthonormal to the plane points inwards (outwards) the surface.}
 \end{figure}
 
  The Gauss map is a local homeomorphism near each of these points, so the local degree is either $1$ or $-1$ if this homeomorphism is orientation-preserving or orientation-reversing respectively. One easily checks that it is orientation-reversing near a positive hyperbolic point, and orientation-preserving near a negative hyperbolic point. Hence, the degree of the map is given by $\deg(f)=h_--h_+-g_-+g_+$. It follows from the description of the Morse function that
$$h_--h_+= -\#\Sigma_-+\#\Sigma_+ + 2g_{-}- 2g_+,$$
where we used that $\#\partial \Sigma_+= \#\partial \Sigma_-$. Hence we deduce that
\begin{align*}
\deg(\xi_{\Gamma})&= h_--h_+-g_-+g_+\\
&=-\#\Sigma_-+\#\Sigma_++g_- - g_+ \\
&= \frac{1}{2} (\chi(\Sigma_+)-\chi(\Sigma_-)),
\end{align*} 
as claimed in the statement.
\end{proof}
In other words, two $\mathbb{R}$-invariant contact structures in $\Sigma \times \mathbb{R}$ defining domains $\Sigma_+, \Sigma_-$ and $\Sigma_+',\Sigma_-'$ (according to their dividing sets $\Gamma$ and $\Gamma'$) are homotopic through plane fields if and only if $ \frac{1}{2} (\chi(\Sigma_+)-\chi(\Sigma_-))=\frac{1}{2} (\chi(\Sigma_+')-\chi(\Sigma_-'))$.
\begin{remark}
Two contact structures are homotopic through plane fields in $\Sigma \times \mathbb{R}$ if and only if they are homotopic through contact structures according to the $h$-principle for contact structures in open manifolds \cite{Gr}.
\end{remark}
Another way of homotopically classifying contact structures in a neighborhood of convex surfaces is computing the Euler class of the plane field (see e.g. \cite{Et, H}). It is given by 
\begin{align*}
e(\xi_{\Gamma})&= \chi(\Sigma_+)-\chi(\Sigma_-),
\end{align*}
that is, by twice the degree of the Gauss map associated with the contact structure.

\bibliographystyle{alpha}

\end{document}